\newtheorem{theorem}{Theorem}[section]
\newtheorem{lemma}[theorem]{Lemma}
\theoremstyle{definition}
\newtheorem{example}[theorem]{Example}
\newtheorem{proposition}[theorem]{Proposition}
\newtheorem{corollary}[theorem]{Corollary}
\theoremstyle{remark}
\numberwithin{equation}{section}
\begin{document}

\title{ Eigenvaluues monotonicity of Witten-Laplacian along the  mean curvature flow}

\author{Shahroud Azami}
\address{Department of Mathematics, Faculty of Sciences,
Imam Khomeini International University, Qazvin, Iran.
}

\email{azami@sci.ikiu.ac.ir,\,\, shahrood78@yahoo.com }


\subjclass[2010]{58C40, 53C44}


\keywords{Laplace operator, eigenvalue, mean curvature flow}

\begin{abstract}
In this paper, we derive  the evolution equation for the first eigenvalue  of the Witten-Laplace operator acting on the space of functions along the  mean curvature flow on a closed oriented  manifold. We show some interesting monotonic quantities under  the mean curvature flow.
\end{abstract}
\maketitle
\section{Introduction}
In this paper we study  monotonicity of the first eigenvalue of Witten-Laplace operator  along the  mean curvature flow.
Over the last few
years  mean curvature flow and other geometric flows  have been a topic of active
research interest in both mathematics and physics and many mathematicians consider the problem under various   geometric flows. There are many results on the evolution and  monotonicity of eigenvalues of the geometric operator,  for instance $p$-Lapalcian and Witten-Laplacian, on evolving manifolds with or without curvature assumptions.\\
Notice that the main study of the properties of eigenvalues of the geometric operator, especially $p$-Laplacian, on evolving closed manifolds is still very young and began when
 Perelman \cite{GP} showed that the functional
\begin{equation*}
F=\int_{M} (R+|\nabla f|^{2}) e^{-f}\,d\nu
\end{equation*}
is increasing under the Ricci flow coupled to a backward heat-type equation, where $R$ is the scalar curvature with respect to the metric  $g(t)$ and $d\nu$ denotes the volume form  of the metric  $g=g(t)$.
The nondecreasing of the functional $F$ yields that the first  eigenvalue of the geometric  operator $-4\Delta+R$ is nondecreasing along the Ricci flow.
Then, Li \cite{JFL} and Cao \cite{XC2} extended the geometric operator $-4\Delta+R$ to the operator $-\Delta+cR$ and both them proved that the first eigenvalue of  the geometric  operator $-\Delta+cR$ for $c\geq\frac{1}{4}$ is nondecreasing along the Ricci flow.
Zeng and et 'al \cite{FZ} studied the monotonicity of eigenvalues of the operator $-\Delta+cR$ along the Ricci-Bourguignon flow. \\
On the other hand in    \cite{AA, JY,  LZ} has been investigated the evolution for the first eigenvalue of $p$-Laplacian along the Ricci-harmonic flow, Ricci flow and mth mean curvature flow, respectively.
 Mao \cite{JM}, studied the monotonicity of the first eigenvalue of the Laplace and the p-Laplace operators under a forced mean curvature flow.
 Author in  \cite{SA} shown that the first eigenvalue of  Witten-Laplace operator $-\Delta_{\phi}$ is monotonic along the Ricci-Bourguignon flow with some assumption where  $\phi\in C^{2}(M)$.
In  \cite{SF} and \cite{FSW} have been  studied the evolution for the first eigenvalue of geometric operator $-\Delta_{\phi}+\frac{R}{2}$ under the Yamabe flow and Ricci flow, respectively, and  constructed some monotonic quantities under this flow.\\


The mean curvature flow is a kind of heat flow equation for the immersion and   occurs in the description of the interface evolution in certain physical models. This is related to the property that such a flow  is the gradient flow of the area functional and therefore appears naturally in problems where a surface energy is minimized \cite{GH, GHA, CES}. Motivated by the above works, in this paper we will study the first eigenvalue of the Witten-Laplacian operator  whose metric satisfies  the  mean curvature flow.

\section{Preliminaries}
In this section, we recall some basic knowledge about mean curvature flow.
Let $M$ be a closed oriented $n$-dimensional manifold with  $n\geq2$ and  $F_{0}:M\to \mathbb{R}^{n+1}$
be a smooth immersion of $M$ into the Euclidean space. The evolution of $M_{0}=F_{0}(M)$ by unnormalized mean curvature flow is a one-parameter family of immersions $F:M\times [0,T)\to \mathbb{R}^{n+1}$ which satisfy the partial differential equation
\begin{equation}\label{mcf1}
\frac{\partial F}{\partial t}(x,t)=-H(x,t)\nu (x,t), \,\,\,\,\,\,\, F(.\, ,0)=F_{0},\,\,\,\,\,x\in M,\,\,\,t\geq 0,
\end{equation}
where $H(x,t)$ and $\nu(x,t)$ are the mean curvature and the normal at the point $F(x,t)$ of the surface $M_{t}=F(.\, ,t)(M)$, respectively.

From (\ref{mcf1}), we can conclude  that $-H(x,t)\nu (x,t)=\Delta_{M_{t}}F(x,t)$, where $\Delta_{M_{t}}$ is the Laplace-Beltrami operator on $M_{t}$. Thus, the  unnormalized mean curvature flow is a kind of heat equation for the immersion,  a  parabolic problem and it has unique solution for small times. Also,  we can apply  the maximum principle for the mean curvature flow.
The  volume-preserving mean curvature flow defined as
\begin{equation}\label{mcf2}
\frac{\partial F}{\partial t}(x,t)=(r(t)-H(x,t))\nu (x,t), \,\,\,\,\,\,\, F(.\, ,0)=F_{0},\,\,\,\,\,x\in M,\,\,\,t\geq 0,
\end{equation}
where  $r(t)=\frac{\int_{M} H d\mu_{t} } {\int_{M}d\mu_{t}}$ is a function of $t$. Short time
exitance and uniqueness for
solution to the mean curvature flow on $[0,T)$ have been shown   in  ~\cite{MRCS}. An extended of mean  curvature flow defined as
\begin{equation}\label{mcf221}
\frac{\partial F}{\partial t}(x,t)=-H^{k}(x,t)\nu (x,t), \,\,\,\,\,\,\, F(.\, ,0)=F_{0},\,\,\,\,\,x\in M,\,\,\,t\geq 0,
\end{equation}
where $k>0$ and $H^{k}$ is $k$th power of the mean curvature. This evolution equation called unnormalized $H^{k}$- flow.
 We consider a  interesting  generalization of the mean curvature  flow as follows
\begin{equation}\label{mcf3}
\frac{\partial F}{\partial t}(x,t)=-\mathcal{S}(x,t)\nu (x,t), \,\,\,\,\,\,\, F(.\, ,0)=F_{0},\,\,\,\,\,x\in M,\,\,\,t\geq 0,
\end{equation}
where $\mathcal{S}(x,t)$ is a given symmetric function of the principal curvatures of $M_{t}$ at $x,\, t$. For instance, if $S$ equal $H$, $H-r(t)$ or $H^{k}$ then this flow is a unnormalized mean curvature flow, the  volume-preserving mean curvature flow, $H^{k}$-flow, respectively.   Ritor\'{e} and Sinestrari  \cite{MRCS}, shown that, if $k_{1},...,k_{n}$  the principal curvatures of $M$ and  $F_{0}:M\to \mathbb{R}^{n+1}$ be a smooth immersion of a closed $n$-dimensional manifold $M$ such that at any point $x\in M$,
\begin{equation*}
\frac{\partial\mathcal{S}}{\partial k_{i}}(k_{1}(x),...,k_{n}(x))>0,\,\,\,\, i=1,2,...n,
\end{equation*}
then equation (\ref{mcf3}) has unique smooth solution on some time interval $[0,T)$. Also, from \cite{MRCS} we have
\begin{lemma}\label{l1}
If $M_{t}$ evolves by (\ref{mcf3}), with $\mathcal{S}$ a symmetric function of the curvatures homogeneous of degree $\gamma>0$, then the geometric quantities associated to $M_{t}$ satisfy the following equations:
\begin{description}
  \item[i]$\frac{ \partial}{\partial t }g_{ij}=-2\mathcal{S}h_{ij}$,\,\,\,\,\,\,$\frac{ \partial}{\partial t }g^{ij}=2\mathcal{S}h^{ij}$,
  \item[ii] $\frac{ \partial}{\partial t }d\mu=-\mathcal{S}Hd\mu$,
  \item[iii] $\frac{ \partial}{\partial t }h_{ij}=\frac{ \partial \mathcal{S}}{\partial h_{l}^{m} }\nabla^{l}\nabla_{m}h_{ij}+
\frac{ \partial^{2} \mathcal{S}}{\partial h_{r}^{s} \partial h_{l}^{k} }\nabla_{i}h_{r}^{s}\nabla_{j}h_{l}^{k}
+\frac{ \partial \mathcal{S}}{\partial h_{l}^{k} }h_{m}^{k}h_{l}^{m}h_{ij}-(\gamma+1)h_{ik}h_{j}^{k}\mathcal{S}$,
  \item[iv] $\frac{ \partial}{\partial t }H=\frac{ \partial \mathcal{S}}{\partial h_{i}^{j} }\nabla^{i}\nabla_{j}H+
\frac{ \partial^{2} \mathcal{S}}{\partial h_{r}^{s} \partial h_{l}^{k} }\nabla^{i}h_{r}^{s}\nabla_{i}h_{l}^{k}
+\frac{ \partial \mathcal{S}}{\partial h_{l}^{k} }h_{m}^{k}h_{l}^{m}H-(\gamma-1)|A|^{2}\mathcal{S}$,
\end{description}
where $A=(h_{ij})$,  $h_{ij}=<\partial_{i}F, \partial_{j}\nu>_{e}$ is the second fundamental form associated with $F:M\to \mathbb{R}^{n+1}$ and $<.,.>_{e}$ denotes the Euclidean scalar product in $\mathbb{R}^{n+1}$.
\end{lemma}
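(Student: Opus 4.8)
Parts (i) and (ii) are direct consequences of the definitions, so the plan is to dispatch them first. Writing $g_{ij}=\langle\partial_iF,\partial_jF\rangle_e$ and differentiating in $t$, I would substitute $\partial_tF=-\mathcal{S}\nu$ and use $\langle\nu,\partial_jF\rangle_e=0$ together with the Weingarten relation $\partial_i\nu=h_i^k\partial_kF$ (consistent with the convention $h_{ij}=\langle\partial_iF,\partial_j\nu\rangle_e$) to obtain $\partial_tg_{ij}=-2\mathcal{S}h_{ij}$; the formula for $\partial_tg^{ij}$ then follows by differentiating $g^{ik}g_{kj}=\delta^i_j$ and raising indices. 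Since $d\mu=\sqrt{\det g}\,dx$ and $\partial_t\log\sqrt{\det g}=\tfrac12 g^{ij}\partial_tg_{ij}=-\mathcal{S}g^{ij}h_{ij}=-\mathcal{S}H$, part (ii) is immediate.

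For (iii) I would first record the variation of the unit normal. From $|\nu|_e^2=1$ one gets $\partial_t\nu\perp\nu$, and differentiating $\langle\nu,\partial_iF\rangle_e=0$ and using $\partial_i\partial_tF=\partial_i(-\mathcal{S}\nu)$ gives $\langle\partial_t\nu,\partial_iF\rangle_e=\nabla_i\mathcal{S}$, hence $\partial_t\nu=g^{kl}(\nabla_k\mathcal{S})\partial_lF$. Then I would differentiate $h_{ij}=\langle\partial_iF,\partial_j\nu\rangle_e$ in $t$ and insert the formulas for $\partial_tF$ and $\partial_t\nu$; using the Gauss formula $\partial_j\partial_lF=\Gamma^m_{jl}\partial_mF+h_{jl}\nu$, the terms involving Christoffel symbols recombine into the Hessian of $\mathcal{S}$, leaving the preliminary identity
\begin{equation*}
\partial_th_{ij}=\nabla_i\nabla_j\mathcal{S}-\mathcal{S}\,h_{ik}h^k_j .
\end{equation*}

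It remains to rewrite $\nabla_i\nabla_j\mathcal{S}$. Viewing $\mathcal{S}$ as a function of the Weingarten operator $h^m_l$, the chain rule gives
\begin{equation*}
\nabla_i\nabla_j\mathcal{S}=\frac{\partial\mathcal{S}}{\partial h^m_l}\nabla_i\nabla_j h^m_l+\frac{\partial^2\mathcal{S}}{\partial h^s_r\,\partial h^k_l}\nabla_i h^s_r\,\nabla_j h^k_l ,
\end{equation*}
which already yields the second term of (iii). In the first term I would use the Codazzi equations — $\nabla_a h_{bc}$ is totally symmetric since $\mathbb{R}^{n+1}$ is flat — to bring the differentiated indices into place, then commute a pair of covariant derivatives; the commutator produces a Riemann-curvature term, which the Gauss equation $R_{abcd}=h_{ac}h_{bd}-h_{ad}h_{bc}$ turns into an expression cubic in the second fundamental form. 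One part of this cubic expression retains the factor $\partial\mathcal{S}/\partial h^k_l$ and becomes the third term of (iii); the remaining part carries the contraction $\frac{\partial\mathcal{S}}{\partial h^m_l}h^m_l$, which equals $\gamma\mathcal{S}$ by Euler's relation for a degree-$\gamma$ homogeneous function, and thus adds $-\gamma\mathcal{S}\,h_{ik}h^k_j$ to the $-\mathcal{S}\,h_{ik}h^k_j$ already present, producing the coefficient $-(\gamma+1)$.

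Finally, (iv) follows from (iii) by tracing with $g^{ij}$: since $H=g^{ij}h_{ij}$, differentiation gives $\partial_tH=(\partial_tg^{ij})h_{ij}+g^{ij}\partial_th_{ij}=2\mathcal{S}|A|^2+g^{ij}\partial_th_{ij}$, and contracting the four terms of (iii) with $g^{ij}$ (using $g^{ij}\nabla^l\nabla_m h_{ij}=\nabla^l\nabla_m H$, $g^{ij}h_{ik}h^k_j=|A|^2$ and $g^{ij}h_{ij}=H$) and combining with $2\mathcal{S}|A|^2$ turns the coefficient of $\mathcal{S}|A|^2$ into $2-(\gamma+1)=-(\gamma-1)$, which is the assertion. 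The routine ingredients here are (i), (ii) and the normal-variation computation; the hard part will be the bookkeeping in the commutation/Gauss--Codazzi step and isolating precisely which cubic-in-$A$ terms are absorbed by Euler's identity so that the coefficients come out as $\gamma+1$ and $\gamma-1$. This is exactly the calculation of \cite{MRCS}, whose normalizations I would follow.
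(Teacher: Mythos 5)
The paper does not actually prove Lemma~\ref{l1}; it is stated with only the citation to Ritor\'e--Sinestrari \cite{MRCS}, so there is no in-paper proof to compare against. Your sketch correctly reconstructs the standard derivation given there: (i)--(ii) from $g_{ij}=\langle\partial_iF,\partial_jF\rangle_e$ and the Jacobi formula for $\det g$; the normal variation $\partial_t\nu=g^{kl}(\nabla_k\mathcal{S})\partial_lF$ and the intermediate identity $\partial_th_{ij}=\nabla_i\nabla_j\mathcal{S}-\mathcal{S}\,h_i^kh_{kj}$; then the chain rule for $\nabla\nabla\mathcal{S}$, Codazzi symmetry, commutation of covariant derivatives with the Gauss equation, and Euler's identity $\tfrac{\partial\mathcal{S}}{\partial h^m_l}h^m_l=\gamma\mathcal{S}$ to land on the coefficient $-(\gamma+1)$; and finally (iv) by tracing (iii), with the extra $2\mathcal{S}|A|^2$ from $\partial_tg^{ij}$ shifting the coefficient to $-(\gamma-1)$. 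That is precisely the bookkeeping in \cite{MRCS}, and the coefficient arithmetic $2-(\gamma+1)=-(\gamma-1)$ checks out, so the plan is sound and matches the cited source.

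One small caution for when you flesh it out: the identity $\nabla_i\nabla_j\mathcal{S}=\tfrac{\partial\mathcal{S}}{\partial h^m_l}\nabla_i\nabla_j h^m_l+\tfrac{\partial^2\mathcal{S}}{\partial h^s_r\partial h^k_l}\nabla_ih^s_r\nabla_jh^k_l$ is valid because the metric is covariantly constant, so differentiating $h^m_l=g^{mk}h_{kl}$ produces no extra terms; but the statement of the lemma has $\nabla^l\nabla_m h_{ij}$ rather than $\nabla_i\nabla_j h^m_l$ in the leading term, and moving the derivatives across (two applications of Codazzi plus one commutation) is exactly where the Gauss-equation curvature contribution and hence the $\gamma\mathcal{S}h_{ik}h^k_j$ term comes from. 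If you write it out, track that commutator carefully; it is the only place the Euler relation is used, and it is the step that makes the coefficients $\gamma\pm1$ rather than $1$.
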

\subsection{Eigenvalues of Witten-Laplace operator}
In this section, we will first  give the definitions for the first eigenvalue of the Witten-Laplace  operator
$\Delta_{\phi}$
 then we will find the formula for the evolution of the first eigenvalue of the Witten-Laplace operator     under the  evolution equation (\ref{mcf3}) on a closed oriented manifold.
Let $M$ be a closed oriented Riemannian manifold, and $(M, g(t))$ be a smooth solution of  the evolution equation (\ref{mcf3})  for $t\in[0,T)$.
Let $\nabla$ be the Levi-Civita  connection on $(M, g(t))$  and  $f:M\to \mathbb{R}$ be a smooth function on $M$ or $f\in W^{1,2}(M)$  where $W^{1,2}(M)$ is  the Sobolev space. The Laplacian of $f$ is defined as
\begin{equation}\label{l}
\Delta f=div(\nabla f)=g^{ij}(\partial_{i}\partial_{j} f-\Gamma_{ij}^{k}\partial_{k}f).
\end{equation}
Let  $d\nu$  be the Riemannian volume measure, and $d\mu$ be  the weight volume measure on  $(M, g(t))$ related to function $\phi$; i.e.
\begin{equation}\label{l1}
d\mu=e^{-\phi(x)}d\nu
\end{equation}
where $\phi \in C^{2}(M)$. The Witten-Laplacian is defined by
\begin{equation}\label{wl}
\Delta_{\phi}=\Delta-\nabla\phi.\nabla
\end{equation}
which  is a symmetric operator on $L^{2}(M,\mu)$ and satisfies  the following  integration by part formula:
\begin{equation*}
\int_{M}<\nabla u, \nabla v>d\mu=-\int_{M} v\Delta_{\phi}u\,d\mu=-\int_{M}u \Delta_{\phi}v\, d\mu\qquad\qquad \forall u,v\in C^{\infty}(M),
\end{equation*}
The Witten-Laplacian is generalize of Laplacian operator, for instance, when $\phi$ is a constant function, the Witten-Laplacian  operator is just the Laplace-Belterami operator. \\
We say that  $\lambda_{1}(t)$
is an eigenvalue of the Witten-Laplace  operator $\Delta_{\phi}$ at time $t\in[0,T)$ whenever for some $f\in W^{1,2}(M)$,
\begin{equation}\label{e1}
-\Delta_{\phi}f=\lambda_{1}(t)f,
\end{equation}
or equivalently
\begin{equation}\label{cdc1}
\int_{M}<\nabla f, \nabla h>d\mu=\lambda_{1} \int_{M} f\,h\,d\mu,\qquad\qquad \forall h\in C^{\infty}(M).
\end{equation}
If in above formula we set $h=f$ then
\begin{equation*}
\lambda_{1}=\frac{\int_{M}|\nabla f|^{2}d\mu}{\int_{M} f^{2}\,d\mu}.
\end{equation*}
therefore the first eigenvalue of the Witten-Laplace  operator defined as
\begin{equation*}
\lambda=\mathop{\min}\limits_{f\neq0}\left \{\int_{M}|\nabla f|^{2}d\mu : \,\, f\in C^{\infty}(M),\,\,\,\int_{M}f^{2}d\mu=1\right\}.
\end{equation*}
Author \cite{SA} shown that the following Lemma:
\begin{lemma}
If $g_{1}$ and $g_{2}$ are two metrics on  Riemannian manifold $M$  which satisfy
\begin{equation*}
\frac{1}{1+\epsilon}g_{1}\leq g_{2}\leq(1+\epsilon )g_{1},
\end{equation*}
then
\begin{equation*}
\lambda(g_{2})-\lambda(g_{1})\leq\left( (1+\epsilon )^{\frac{n}{2}+1}-(1+\epsilon )^{-\frac{n}{2}} \right)(1+\epsilon)^{\frac{n}{2}}\lambda(g_{1}).
\end{equation*}
In particular, $\lambda $ is a continuous function respect to the $C^{2}$-topology.
\end{lemma}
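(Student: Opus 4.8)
The plan is to compare the Rayleigh quotients of $\lambda(g_2)$ and $\lambda(g_1)$ directly, exploiting the two-sided bound between the metrics. First I would record how the quantities entering the Rayleigh quotient transform under a pinching $\frac{1}{1+\epsilon}g_1 \le g_2 \le (1+\epsilon)g_1$. For the gradient term, since $|\nabla f|^2_{g} = g^{ij}\partial_i f\,\partial_j f$ and the inverse metrics satisfy the reverse inequalities, one gets $\frac{1}{1+\epsilon}|\nabla f|^2_{g_1} \le |\nabla f|^2_{g_2} \le (1+\epsilon)|\nabla f|^2_{g_1}$. For the weighted volume measure $d\mu = e^{-\phi}\,d\nu$, the function $\phi$ is fixed, so only the Riemannian volume form changes; from the metric pinching one has $(1+\epsilon)^{-n/2} d\nu_{g_1} \le d\nu_{g_2} \le (1+\epsilon)^{n/2} d\nu_{g_1}$, and multiplying by the common factor $e^{-\phi}$ gives the same bounds for $d\mu$.

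Next I would take a function $f$ realizing (or nearly realizing) the minimum in the variational characterization of $\lambda(g_2)$, normalized so that $\int_M f^2\,d\mu_{g_2} = 1$. Using the volume bounds, $\int_M f^2\,d\mu_{g_1} \ge (1+\epsilon)^{-n/2}\int_M f^2\,d\mu_{g_2} = (1+\epsilon)^{-n/2}$, and using the gradient and volume bounds together, $\int_M |\nabla f|^2_{g_1}\,d\mu_{g_1} \le (1+\epsilon)\cdot(1+\epsilon)^{n/2}\int_M |\nabla f|^2_{g_2}\,d\mu_{g_2} = (1+\epsilon)^{n/2+1}\lambda(g_2)$. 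Feeding the test function $f$ (after rescaling by its $g_1$-norm) into the Rayleigh quotient for $g_1$ yields
\begin{equation*}
\lambda(g_1) \le \frac{\int_M |\nabla f|^2_{g_1}\,d\mu_{g_1}}{\int_M f^2\,d\mu_{g_1}} \le \frac{(1+\epsilon)^{n/2+1}\lambda(g_2)}{(1+\epsilon)^{-n/2}} = (1+\epsilon)^{n+1}\lambda(g_2),
\end{equation*}
hence $\lambda(g_2) \ge (1+\epsilon)^{-(n+1)}\lambda(g_1)$.

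Then I would run the symmetric argument in the other direction: take $f$ realizing the minimum for $\lambda(g_1)$ with $\int_M f^2\,d\mu_{g_1}=1$, and estimate the $g_2$-Rayleigh quotient. Here $\int_M f^2\,d\mu_{g_2} \ge (1+\epsilon)^{-n/2}$ and $\int_M |\nabla f|^2_{g_2}\,d\mu_{g_2} \le (1+\epsilon)^{n/2+1}\lambda(g_1)$, so $\lambda(g_2) \le (1+\epsilon)^{n/2+1}(1+\epsilon)^{n/2}\lambda(g_1) = (1+\epsilon)^{n+1}\lambda(g_1)$. Subtracting $\lambda(g_1)$ gives
\begin{equation*}
\lambda(g_2) - \lambda(g_1) \le \left((1+\epsilon)^{n+1} - 1\right)\lambda(g_1),
\end{equation*}
which I would then rewrite in the stated form by noting $(1+\epsilon)^{n+1} - 1 = (1+\epsilon)^{n/2}\left((1+\epsilon)^{n/2+1} - (1+\epsilon)^{-n/2}\right)$, matching the claimed inequality exactly.

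For the continuity statement in the $C^2$-topology, I would observe that if $g_k \to g$ in $C^2$ (in particular $C^0$), then for every $\epsilon>0$ the pinching hypothesis holds for $k$ large, so the displayed two-sided estimate forces $\lambda(g_k) \to \lambda(g)$; one also uses the symmetric lower bound $\lambda(g_1) - \lambda(g_2) \le \left((1+\epsilon)^{n+1}-1\right)\lambda(g_2)$ obtained by swapping the roles of $g_1$ and $g_2$ in the argument above. The main subtlety — more bookkeeping than genuine obstacle — is tracking the exponents of $(1+\epsilon)$ correctly through both the gradient term (one factor) and the volume/measure term ($n/2$ factors) and making sure the test-function normalizations are handled so that the Rayleigh quotient comparison is clean; the fact that $\phi$ is held fixed is what makes the weighted measure behave exactly like the Riemannian volume form up to the harmless factor $e^{-\phi}$.
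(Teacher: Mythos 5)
Your proposal is correct, and it is the standard Rayleigh-quotient comparison: pinch the gradient term by $(1+\epsilon)^{\pm1}$ via the inverse metrics, pinch the weighted volume element by $(1+\epsilon)^{\pm n/2}$ (with the $e^{-\phi}$ factor unchanged since $\phi$ is fixed), feed a near-minimizer for one metric into the Rayleigh quotient for the other, and use the scale invariance of the quotient; the identity $(1+\epsilon)^{n+1}-1=(1+\epsilon)^{n/2}\bigl((1+\epsilon)^{n/2+1}-(1+\epsilon)^{-n/2}\bigr)$ shows your constant matches the stated one exactly. The present paper does not reprove this lemma but imports it from reference [SA], where essentially this same argument is given, so your approach coincides with the source.
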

we say that $f$ is normalized eigenfunction corresponding to $\lambda$ whenever  $\lambda=\int_{M}|\nabla f|^{2} d\mu$ and $\int_{M}f^{2}d\mu=1$.
Similar method of \cite{SA}, at  time  $t_{0}\in[0,T)$, we first let $f_{0}=f(t_{0})$ be the eigenfunction for the eigenvalue $\lambda(t_{0})$ of Witten-Laplacian. We consider the following smooth function
$$l(t)=f_{0}\left[\frac{\det(g_{ij}(t_{0}))}{\det(g_{ij}(t))}\right]^{\frac{1}{2}}$$
along the  evolution equation (\ref{mcf3}). We assume that
$$f(t)=\frac{l(t)}{\left(\int_{M}(l(t))^{2}d\mu\right)^{\frac{1}{2}}}$$
which $f(t)$ is smooth function under the  evolution equation (\ref{mcf3}), satisfies $\int_{M}f^{2}d\mu=1$ and at time $t_{0}$, $f$ is the eigenfunction for $\lambda$ of Witten-Laplacian. Now we define a smooth eigenvalue function
\begin{equation}\label{R6}
\lambda(f,t):=\int_{M}|\nabla f|^{2}d\mu
\end{equation}
where $\lambda(f(t_{0}),t_{0})=\lambda(t_{0}),\,\, f$ is smooth function and satisfies
\begin{equation}\label{e2}
\int_{M} f^{2}\,d\mu=1.
\end{equation}
\section{Variation of $\lambda(t)$}
In this section, we will give some useful evolution formulas for $\lambda(t)$ under the evolution equation (\ref{mcf3}). Now, we give a useful proposition  about the variation of eigenvalues of  Witten-Laplacian under the  evolution equation (\ref{mcf3}).
\begin{proposition}\label{pro1}
Let  $(M^{n}, g(t))$ be  a solution of  the   evolution equation (\ref{mcf3})
on the smooth  closed manifold $(M^{n}, g_{0})$. If $\lambda(t)$ denotes the
evolution of the first  eigenvalue under the evolution equation (\ref{mcf3}), then
\begin{equation}\label{R7}
\frac{d}{dt}\lambda(f,t)|_{t=t_{0}}=\lambda(t_{0})\int_{M}\mathcal{S}H f^{2}d\mu
+2\int_{M}\mathcal{S}h^{ij}\nabla_{i}f\nabla_{j} fd\mu-\int_{M}|\nabla f|^{2}\mathcal{S}H d\mu
\end{equation}
\end{proposition}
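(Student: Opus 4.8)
The plan is to differentiate the quantity $\lambda(f,t)=\int_{M}|\nabla f|^{2}\,d\mu$ directly in $t$, using only the variation formulas of Lemma~\ref{l1}, and to impose the eigenvalue equation \emph{after} the differentiation, at $t=t_{0}$. Writing $|\nabla f|^{2}=g^{ij}\nabla_{i}f\nabla_{j}f$ and recalling that $d\mu=e^{-\phi}d\nu$ with $\phi$ independent of $t$, Lemma~\ref{l1}(ii) yields $\tfrac{\partial}{\partial t}d\mu=-\mathcal{S}H\,d\mu$, while Lemma~\ref{l1}(i) yields $\tfrac{\partial}{\partial t}g^{ij}=2\mathcal{S}h^{ij}$. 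Since the covariant derivative of a function is its ordinary derivative, $\tfrac{\partial}{\partial t}(\nabla_{i}f)=\nabla_{i}f_{t}$ with $f_{t}=\tfrac{\partial f}{\partial t}$. Collecting these and using the symmetry of $g^{ij}$,
\begin{equation*}
\frac{d}{dt}\lambda(f,t)=\int_{M}\Big(2\mathcal{S}h^{ij}\nabla_{i}f\nabla_{j}f+2\langle\nabla f_{t},\nabla f\rangle\Big)\,d\mu-\int_{M}|\nabla f|^{2}\,\mathcal{S}H\,d\mu,
\end{equation*}
which already produces the second and third terms of (\ref{R7}); what remains is to identify the cross term $2\int_{M}\langle\nabla f_{t},\nabla f\rangle\,d\mu$ at $t=t_{0}$.

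For that term I would integrate by parts with the Witten-Laplacian via $\int_{M}\langle\nabla u,\nabla v\rangle\,d\mu=-\int_{M}v\,\Delta_{\phi}u\,d\mu$. At $t=t_{0}$ the function $f$ satisfies $-\Delta_{\phi}f=\lambda(t_{0})f$, so
\begin{equation*}
2\int_{M}\langle\nabla f_{t},\nabla f\rangle\,d\mu\Big|_{t=t_{0}}=-2\int_{M}f_{t}\,\Delta_{\phi}f\,d\mu=2\lambda(t_{0})\int_{M}f_{t}f\,d\mu.
\end{equation*}
It then remains to evaluate $\int_{M}f_{t}f\,d\mu$, and here the normalization (\ref{e2}) is exactly what is needed: differentiating $\int_{M}f^{2}\,d\mu=1$ and using $\tfrac{\partial}{\partial t}d\mu=-\mathcal{S}H\,d\mu$ gives $2\int_{M}ff_{t}\,d\mu=\int_{M}f^{2}\mathcal{S}H\,d\mu$. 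Substituting this back converts the cross term into $\lambda(t_{0})\int_{M}\mathcal{S}H f^{2}\,d\mu$, the first term of (\ref{R7}), completing the identity.

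Once the variation formulas are available the argument is essentially bookkeeping, so there is no deep obstacle; the points requiring care are (a) differentiating before imposing $-\Delta_{\phi}f=\lambda f$, since that identity holds only at $t=t_{0}$; (b) noting that the time-independent weight $e^{-\phi}$ leaves the volume variation formula of Lemma~\ref{l1}(ii) unchanged; and (c) justifying the interchange of $\tfrac{\partial}{\partial t}$ with $\nabla$ and with the integral, which follows from the smoothness of the family $f(t)$ built from $l(t)$ together with the compactness of $M$.
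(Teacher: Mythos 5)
Your proof is correct and follows essentially the same route as the paper: differentiate $\int_M|\nabla f|^2\,d\mu$ using Lemma~\ref{l1}(i),(ii), then handle the cross term $2\int_M\langle\nabla f_t,\nabla f\rangle\,d\mu$ at $t=t_0$ via the eigenvalue relation together with the differentiated normalization $\int_M f^2\,d\mu=1$. The paper invokes the weak form (\ref{cdc1}) with $h=f'$ where you integrate by parts and then apply $-\Delta_\phi f=\lambda f$; these are the same step, and your explicit remark that the time-independence of $\phi$ makes $d\mu=e^{-\phi}d\nu$ inherit the variation formula of $d\nu$ is a useful clarification the paper leaves implicit.
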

\begin{proof}
According to the above description,
 $ \lambda(f,t)$ is a smooth function and by  derivation  (\ref{R6}) we have
\begin{equation}\label{R8}
\frac{d}{dt}\lambda(f,t)=\int_{M}\frac{d}{dt}(|\nabla
f|^{2})d\mu+\int_{M}|\nabla f|^{2}\frac{d}{dt}(d\mu).
\end{equation}
On the other hand, we have
\begin{equation}\label{R9}
\frac{d}{dt}(d\mu_{t})=-\mathcal{S}Hd\mu
\end{equation}
and
\begin{eqnarray}\label{R10}
\frac{d}{dt}\bigg(|\nabla f|^{2}\bigg)&=&\frac{d}{dt}\bigg(g^{ij}\nabla_{i}f\nabla_{j}f\bigg)=\frac{\partial}{\partial t}(g^{ij})\nabla_{i}f
\nabla_{j}f+2g^{ij}\nabla_{i}f'\nabla_{j}f\\\nonumber
&=&-g^{il}g^{jk}\frac{\partial}{\partial
t}(g_{lk})\nabla_{i}f\nabla_{j}f+2<\nabla f',\nabla f>.\nonumber
\end{eqnarray}
Plug in  (\ref{R9}),  (\ref{mcf3}) and (\ref{R10}) into (\ref{R8}), yields
\begin{eqnarray}\label{R11}
\frac{d}{dt}\lambda(f,t)=2\int_{M}\left\{\mathcal{S}h^{ij}\nabla_{i}f\nabla_{j}f+<\nabla f',\nabla f>\right\}d\mu-\int_{M}|\nabla f|^{2}\mathcal{S}Hd\mu.
\end{eqnarray}
Now, using (\ref{e2}), from the condition
\begin{equation*}
 \int_{M}f^{2}d\mu=1,
\end{equation*}
and the time derivative, we  can get
\begin{equation}\label{R14}
 2\int_{M}f'fd\mu=\int_{M}f^{2}\mathcal{S} H d\mu,
\end{equation}
(\ref{cdc1}) and (\ref{R14}) imply that at time $t=t_{0}$, we have
\begin{equation}\label{R15}
\int_{M}<\nabla f',\nabla f>d\mu=\lambda(t_{0})
\int_{M}f'fd\mu=\frac{\lambda(t_{0})}{2}\int_{M}f^{2}\mathcal{S} H d\mu.
\end{equation}
Replacing   (\ref{R15}) in (\ref{R11}), we obtain (\ref{R7}).
\end{proof}
Now, if  in  the pervious proposition we suppose that $\mathcal{S}=H$, $\mathcal{S}=H-r$  or  $\mathcal{S}=H^{k}$ then  we can obtain variation of $\lambda(t)$ under the volume-preserving,  unnormalized mean curvature flow and  $H^{k}$-flow  as follow:
\begin{corollary}
Let  $(M^{n}, g(t))$ be  a solution of  the  unnormalized mean curvature flow
on the smooth  closed oriented  manifold $(M^{n}, g_{0})$. If $\lambda(t)$ denotes the
evolution of a first eigenvalue under the unnormalized  mean curvature  flow, then
\begin{equation}\label{R16}
\frac{d\lambda}{dt}(f,t)|_{t=t_{0}}=\lambda(t_{0})\int_{M}H^{2}f^{2}d\mu
+2\int_{M}H h^{ij}\nabla_{i} f\nabla_{j} fd\mu-\int_{M}|\nabla f|^{p}H^{2} d\mu.
\end{equation}
Also, for  the volume-preserving  mean curvature  flow, we have
\begin{eqnarray}\label{R17}\nonumber
\qquad\frac{d\lambda}{dt}(f,t)|_{t=t_{0}}&=&\lambda(t_{0})\int_{M}(H-r)H f^{2}d\mu
+2\int_{M}(H-r)h^{ij}\nabla_{i} f\nabla_{j} f d\mu\\
&&-\int_{M}|\nabla f|^{2}(H-r)H d\mu.
\end{eqnarray}
and for unnormalized $H^{k}$-flow, we obtain
\begin{equation}\label{R162}
\qquad\frac{d\lambda}{dt}(f,t)|_{t=t_{0}}=\lambda(t_{0})\int_{M}H^{k+1}f^{2}d\mu
+2\int_{M}H^{k} h^{ij}\nabla_{i} f\nabla_{j} fd\mu-\int_{M}|\nabla f|^{p}H^{k+1} d\mu.
\end{equation}
\end{corollary}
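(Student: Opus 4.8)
The plan is to read off all three formulas directly from Proposition~\ref{pro1} by specializing the symmetric function $\mathcal{S}$ in the generalized flow (\ref{mcf3}). The only preliminary point to settle is that each of the three concrete flows is genuinely an instance of (\ref{mcf3}) for an admissible $\mathcal{S}$, so that the identity (\ref{R7}) applies verbatim; after that, everything is substitution.

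First I would observe that the unnormalized mean curvature flow (\ref{mcf1}) is (\ref{mcf3}) with $\mathcal{S}=H=k_{1}+\dots+k_{n}$, a symmetric function of the principal curvatures, homogeneous of degree $\gamma=1$ with $\partial\mathcal{S}/\partial k_{i}=1>0$, so Lemma~\ref{l1} and Proposition~\ref{pro1} apply. Putting $\mathcal{S}=H$ into (\ref{R7}) turns $\lambda(t_{0})\int_{M}\mathcal{S}Hf^{2}d\mu$ into $\lambda(t_{0})\int_{M}H^{2}f^{2}d\mu$, the term $2\int_{M}\mathcal{S}h^{ij}\nabla_{i}f\nabla_{j}f\,d\mu$ into $2\int_{M}Hh^{ij}\nabla_{i}f\nabla_{j}f\,d\mu$, and $-\int_{M}|\nabla f|^{2}\mathcal{S}H\,d\mu$ into $-\int_{M}|\nabla f|^{2}H^{2}\,d\mu$, which is (\ref{R16}).

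Next, for the volume-preserving flow (\ref{mcf2}) I would take $\mathcal{S}=H-r(t)$, so that $-\mathcal{S}\nu=(r-H)\nu$ matches (\ref{mcf2}); since $r(t)$ depends only on $t$, the evolution equations Lemma~\ref{l1}(i)--(ii) — the only facts used in the proof of Proposition~\ref{pro1} — still hold, and substituting $\mathcal{S}=H-r$ into (\ref{R7}) gives (\ref{R17}). Finally, for the $H^{k}$-flow (\ref{mcf221}) I would take $\mathcal{S}=H^{k}$, symmetric and homogeneous of degree $\gamma=k>0$ with $\partial\mathcal{S}/\partial k_{i}=kH^{k-1}$ (positive wherever $H>0$), so Proposition~\ref{pro1} applies on any interval on which the flow exists; substituting $\mathcal{S}=H^{k}$ into (\ref{R7}) produces $\lambda(t_{0})\int_{M}H^{k+1}f^{2}d\mu+2\int_{M}H^{k}h^{ij}\nabla_{i}f\nabla_{j}f\,d\mu-\int_{M}|\nabla f|^{2}H^{k+1}\,d\mu$, i.e.\ (\ref{R162}).

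There is no analytic obstacle: every step is a direct substitution into an already-proved identity. The only thing warranting a word of care is that $\mathcal{S}=H-r$ is not homogeneous in the principal curvatures, yet still falls under Proposition~\ref{pro1}, because that proof uses only $\partial_{t}g^{ij}=2\mathcal{S}h^{ij}$, $\partial_{t}\,d\mu=-\mathcal{S}H\,d\mu$ and the normalization (\ref{e2}), none of which invoke homogeneity. I would also note in passing that the exponent on $|\nabla f|$ in the last terms of (\ref{R16}) and (\ref{R162}) should read $2$, consistent with (\ref{R7}).
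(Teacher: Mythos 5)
Your proposal is correct and matches the paper's approach exactly: the corollary is obtained by substituting $\mathcal{S}=H$, $\mathcal{S}=H-r$, and $\mathcal{S}=H^{k}$ into the general evolution formula (\ref{R7}) of Proposition~\ref{pro1}. Your added remark that the proof of Proposition~\ref{pro1} uses only Lemma~\ref{l1}(i)--(ii) and the normalization (\ref{e2}) (so that the non-homogeneous choice $\mathcal{S}=H-r$ is still admissible), and your observation that the exponent $p$ in (\ref{R16}) and (\ref{R162}) should be $2$, are both accurate and worth noting.
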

In the following theorem, we show that the evolution of the first eigenvalue (\ref{R6}) under the unnormalized mean curvature flow with some  conditions at the initial time, is  nondecreasing.
\begin{theorem}\label{tt1}
Let  $(M^{n}, g(t))$ be  a solution of  the  unnormalized mean curvature flow on the smooth  closed manifold $(M^{n}, g_{0})$ and  $\lambda(t)$ denotes the evolution of the first eigenvalue under the unnormalized mean curvature flow. At the initial time $t=0$, if $H>0$, and there exists a non-negative constant $\epsilon$ such that
\begin{equation}\label{t1}
h_{ij}\geq\epsilon Hg_{ij},\,\,\,\,\, \frac{1}{2}\leq \epsilon
\end{equation}
 then $\lambda(t)$ is nondecreasing  under  the  unnormalized mean curvature flow.
\end{theorem}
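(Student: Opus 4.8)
The plan is to start from the variational formula~\eqref{R16} for the unnormalized mean curvature flow ($\mathcal{S}=H$) established in the preceding corollary, namely
\[
\frac{d}{dt}\lambda(f,t)\Big|_{t=t_0}=\lambda(t_0)\int_M H^2 f^2\,d\mu+2\int_M H\,h^{ij}\nabla_i f\nabla_j f\,d\mu-\int_M |\nabla f|^2 H^2\,d\mu,
\]
and to show that under the hypotheses~\eqref{t1} the right-hand side is nonnegative at $t=t_0$. Since $t_0\in[0,T)$ is arbitrary, one must first argue that the pinching condition $h_{ij}\ge\epsilon H g_{ij}$ with $\epsilon\ge\tfrac12$ and $H>0$ is preserved along the flow. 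This is a standard maximum-principle argument for the mean curvature flow: the quantity $h_{ij}-\epsilon H g_{ij}$ satisfies a parabolic (tensor) evolution equation coming from Lemma~\ref{l1}(iii)--(iv) with $\gamma=1$, and Hamilton's tensor maximum principle, together with the fact that $H>0$ is preserved (again by the maximum principle applied to the evolution of $H$), gives propagation of the pinching for all $t\in[0,T)$. I would invoke this as essentially known for MCF of hypersurfaces and state it as the key preserved condition, so that it suffices to verify the differential inequality pointwise in $t$.

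Granting the preserved pinching, the core estimate is purely algebraic. The term $2\int_M H\,h^{ij}\nabla_i f\nabla_j f\,d\mu$ is controlled from below using $h_{ij}\ge\epsilon H g_{ij}$ and $H>0$: since $\nabla_i f\nabla_j f$ is a nonnegative-definite symmetric $2$-tensor, contracting the inequality $h^{ij}\ge\epsilon H g^{ij}$ (which follows from $h_{ij}\ge\epsilon Hg_{ij}$ by raising indices, as $g^{ij}$ is positive definite) against it yields $h^{ij}\nabla_i f\nabla_j f\ge\epsilon H|\nabla f|^2$. Hence
\[
2\int_M H\,h^{ij}\nabla_i f\nabla_j f\,d\mu\ge 2\epsilon\int_M H^2|\nabla f|^2\,d\mu\ge\int_M H^2|\nabla f|^2\,d\mu,
\]
using $\epsilon\ge\tfrac12$ and $H^2|\nabla f|^2\ge0$. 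This exactly cancels the negative term $-\int_M |\nabla f|^2 H^2\,d\mu$, leaving $\frac{d}{dt}\lambda(f,t)|_{t=t_0}\ge\lambda(t_0)\int_M H^2 f^2\,d\mu\ge0$ because $\lambda(t_0)\ge0$ (it is a minimum of a Rayleigh quotient of squared gradients).

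Finally I would translate this into monotonicity of $\lambda(t)$ itself. The standard device, as in~\cite{SA}, is that $\lambda(f(t),t)$ is a smooth function with $\lambda(f(t_0),t_0)=\lambda(t_0)$, and by the min-characterization $\lambda(t)\le\lambda(f(t),t)$ for $t$ near $t_0$ (since $f(t)$ is an admissible test function with $\int_M f(t)^2\,d\mu=1$), while equality holds at $t_0$; hence the one-sided derivative of $\lambda(t)$ at $t_0$ is bounded below by $\frac{d}{dt}\lambda(f,t)|_{t=t_0}\ge0$, and since $\lambda$ is continuous (by the Lemma quoted above) and $t_0$ arbitrary, $\lambda(t)$ is nondecreasing on $[0,T)$. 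The main obstacle is the preservation of the pinching condition~\eqref{t1}: without it the sign argument only works at $t=0$, so one genuinely needs the maximum-principle propagation of $h_{ij}\ge\epsilon Hg_{ij}$ and $H>0$ along the flow; the rest is the short algebraic computation above and the soft min-principle comparison.
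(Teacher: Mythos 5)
Your argument is correct and follows essentially the same route as the paper: Huisken's maximum-principle preservation of the pinching $h_{ij}\ge\epsilon H g_{ij}$ (with $H>0$), the pointwise bound $h^{ij}\nabla_if\nabla_jf\ge\epsilon H|\nabla f|^2$, substitution into \eqref{R16} to obtain $\frac{d}{dt}\lambda(f,t)\big|_{t_0}\ge\lambda(t_0)\int_M H^2f^2\,d\mu+(2\epsilon-1)\int_M H^2|\nabla f|^2\,d\mu\ge 0$, and then the comparison $\lambda(t)\le\lambda(f(t),t)$ with equality at $t_0$ to transfer monotonicity to $\lambda(t)$ itself. The only cosmetic difference is that you conclude nondecreasing from $\ge 0$, whereas the paper notes the first term is strictly positive (since $\lambda(t_0)>0$ and $H>0$) and integrates on a left subinterval $[t_1,t_0]$ to obtain strict increase; the underlying soft min-comparison step is the same.
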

\begin{proof}G.  Huisken  in ~\cite{GH},
 by the weak maximum principle for symmetric tensors  shown that
\begin{equation*}
h_{ij}\geq\epsilon Hg_{ij}\,\,\,\, \,\text{in} \,\, M^{n}\times[0,T),
\end{equation*}
thus $h_{ij}\geq\epsilon Hg_{ij} $ is preserved by the unnormalized mean curvature flow which implies that
\begin{equation*}
h^{ij}\nabla_{i}f\nabla_{j}f \geq\epsilon H|\nabla f|^{2}\,\,\,\, \,\text{in} \,\, M^{n}\times[0,T).
\end{equation*}
Replacing  this into (\ref{R16}) yields
\begin{eqnarray*}\nonumber
\frac{d\lambda}{dt}(f,t)|_{t=t_{0}}
&\geq&\lambda(t_{0})\int_{M}H^{2} f^{2}d\mu+(2\epsilon-1)\int_{M}|\nabla f|^{2}H^{2}d\mu
\end{eqnarray*}
we arrive at $\frac{d}{dt}\lambda(f(t),t)>0$  in any sufficiently small neighborhood of $t_{0}$, then
\begin{equation*}
\lambda(f(t_{1}),t_{1})<\lambda(f(t_{0}),t_{0})\,\,\,\,\,\, \text{on}\,\,\,\,\,\,  [t_{1},t_{0}].
\end{equation*}
 Since $\lambda(f(t_{0}),t_{0})=\lambda(t_{0})$ and  $\lambda(f(t_{1}),t_{1})\geq\lambda(t_{1})$ we conclude that $\lambda(t_{1})<\lambda(t_{0})$  which show that $\lambda(t)$ is strictly monotone increasing
 in any sufficiently small neighborhood of $t_{0}$.  Then $\lambda(t)$ is strictly increasing along the evolution equation (\ref{mcf3})  on $[0,T)$ because of  $t_{0}$ is arbitrary.
\end{proof}
Now, suppose that   the  evolution equation (\ref{mcf3}),  with $\mathcal{S}=(H^{2}- \tilde{r}(t))$ where $\tilde{r}(t)=\frac{\int_{M}H^{2}d\mu}{\int_{M}d\mu}$, i.e.
\begin{equation}\label{vmcf1}
\frac{\partial F}{\partial t}(x,t)=(\tilde{r}(t)-H^{2}(x,t))\nu (x,t), \,\,\,\,\,\,\, F(.\, ,0)=F_{0},\,\,\,\,\,x\in M,\,\,\,t\geq 0.
\end{equation}
In the next theorem we study some  monotonic quantities dependent  to   the first eigenvalue  (\ref{R6})  under the equation (\ref{vmcf1}).
\begin{theorem}
Let  $(M^{n}, g(t))$ be  a solution of  the evolution  equation (\ref{vmcf1}) on the smooth  closed oriented manifold $(M^{n}, g_{0})$ and  $\lambda(t)$ denotes the evolution of the first eigenvalue  (\ref{R6}) under   the evolution  equation (\ref{vmcf1}). If at the initial time $t=0$, $H>0$ and there exist non-negative constant $\epsilon$ such that $
h_{ij}\geq\epsilon Hg_{ij}
$
then there exist $\varphi(t)$   and $\psi(t)$ such that the quantities
\begin{equation*}
e^{-\int_{0}^{t}\left( \psi(\tau)-\varphi(\tau)+2\epsilon \psi(\tau) \right)d\tau}\lambda(t)
\end{equation*}
is nondecreasing along the evolution  equation  (\ref{vmcf1}). Similarly,
\begin{equation*}
e^{-\int_{0}^{t}\left( \varphi(\tau)-\psi(\tau)+2 \varphi(\tau) \right)d\tau}\lambda(t)
\end{equation*}
 is nonincreasing along  the evolution  equation  (\ref{vmcf1}).
\end{theorem}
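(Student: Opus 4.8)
The plan is to specialize Proposition~\ref{pro1} to the speed $\mathcal{S}=H^{2}-\tilde r(t)$ and then convert the resulting identity for $\frac{d}{dt}\lambda(f,t)$ into a two-sided differential inequality of Gr\"onwall type for $\lambda(t)$; multiplying by the appropriate integrating factors then yields the two monotonicity statements.

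First I would note that along (\ref{vmcf1}) one has $\partial\mathcal{S}/\partial k_{i}=2H>0$ at $t=0$, so the flow is parabolic, and $\mathcal{S}$ is homogeneous of degree $2$, so the short-time existence and the evolution equations recalled in Section~2 are available. Arguing by the weak maximum principle for symmetric $2$-tensors exactly as in the proof of Theorem~\ref{tt1}, the conditions $H>0$ and $h_{ij}\ge\epsilon Hg_{ij}$ are preserved along (\ref{vmcf1}); since $\epsilon\ge0$ and $H>0$ this makes $M_{t}$ convex, with every principal curvature lying in $[\epsilon H,H]$, so that pointwise on $M$
\begin{equation*}
\epsilon H|\nabla f|^{2}\le h^{ij}\nabla_{i}f\nabla_{j}f\le H|\nabla f|^{2}.
\end{equation*}

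Next I would set $\varphi(t):=\max_{M}\big(H^{3}-\tilde r(t)H\big)$ and $\psi(t):=\min_{M}\big(H^{3}-\tilde r(t)H\big)$, which are finite and continuous in $t$ since $M$ is compact and the solution is smooth, and which satisfy $\psi(t)\le0\le\varphi(t)$ because $\tilde r(t)$ is the $\mu$-average of $H^{2}$. Plugging $\mathcal{S}=H^{2}-\tilde r$ into (\ref{R7}) and using $\int_{M}f^{2}\,d\mu=1$, $\int_{M}|\nabla f|^{2}\,d\mu=\lambda(f,t_{0})=\lambda(t_{0})\ge0$ together with the pointwise bounds above on each of the three terms, I expect to get, at the arbitrary time $t=t_{0}$,
\begin{equation*}
\big(\psi-\varphi+2\epsilon\psi\big)\lambda\le\frac{d}{dt}\lambda(f,t)\Big|_{t=t_{0}}\le\big(\varphi-\psi+2\varphi\big)\lambda,
\end{equation*}
all functions evaluated at $t_{0}$. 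Since $\lambda(t)\le\lambda(f(t),t)$ in a neighborhood of $t_{0}$ with equality at $t_{0}$, and $\lambda$ is continuous in $t$ by the continuity lemma stated above, these inequalities pass to the corresponding one-sided differential inequalities for $\lambda(t)$ on all of $[0,T)$; multiplying by $e^{-\int_{0}^{t}(\psi-\varphi+2\epsilon\psi)\,d\tau}$ and by $e^{-\int_{0}^{t}(\varphi-\psi+2\varphi)\,d\tau}$ and integrating then completes the proof.

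The step I expect to be the main obstacle is controlling the mixed gradient term $2\int_{M}(H^{2}-\tilde r)h^{ij}\nabla_{i}f\nabla_{j}f\,d\mu$: because $H^{2}-\tilde r$ changes sign (it is the oscillation of $H^{2}$ about its $\mu$-average), one cannot simply multiply the pinching inequality $h^{ij}\nabla_{i}f\nabla_{j}f\ge\epsilon H|\nabla f|^{2}$ through by $H^{2}-\tilde r$ without reversing it, so one must use the lower bound $\epsilon H|\nabla f|^{2}$ where $H^{2}\ge\tilde r$ and the convexity upper bound $H|\nabla f|^{2}$ where $H^{2}<\tilde r$ (and the other way round for the reverse estimate), tracking the signs carefully so that precisely the coefficients $\psi-\varphi+2\epsilon\psi$ and $\varphi-\psi+2\varphi$ emerge. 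A secondary point to verify is that the tensor maximum principle really does apply to the non-monotone speed $\mathcal{S}=H^{2}-\tilde r$, i.e.\ that $h_{ij}\ge\epsilon Hg_{ij}$ is genuinely preserved by (\ref{vmcf1}).
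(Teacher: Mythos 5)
Your outline matches the paper's---specialize Proposition~\ref{pro1} to $\mathcal{S}=H^2-\tilde r$, invoke preservation of the pinching $\epsilon Hg_{ij}\leq h_{ij}\leq Hg_{ij}$ (the paper cites Rivas--Sinestrari~\cite{CES} for this, which you should keep, since the nonlocal term $\tilde r$ makes a direct Huisken-style argument nontrivial), bound each of the three terms in~(\ref{R7}) pointwise, and integrate a Gr\"onwall inequality. More importantly, you correctly pinpoint the genuine difficulty: $\mathcal{S}=H^2-\tilde r$ always changes sign (since $\tilde r$ is the $\mu$-average of $H^2$), so the pinching inequality $h^{ij}\nabla_if\nabla_jf\geq\epsilon H|\nabla f|^2$ cannot simply be multiplied through by $\mathcal{S}$. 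The paper does exactly that in the first displayed line of its proof, writing $2\int_M\mathcal{S}h^{ij}\nabla_if\nabla_jf\,d\mu \geq 2\epsilon\int_M\mathcal{S}H|\nabla f|^2\,d\mu$ with no justification on $\{\mathcal{S}<0\}$; the concern you flag is not a side point but the central gap in the paper's argument.

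However, the case split you propose does not recover the claimed coefficient. On $\{\mathcal{S}<0\}$ you are forced to use the \emph{reversed} pinching $h^{ij}\nabla_if\nabla_jf\leq H|\nabla f|^2$, which gives $\mathcal{S}h^{ij}\nabla_if\nabla_jf\geq\mathcal{S}H|\nabla f|^2\geq\psi|\nabla f|^2$ with \emph{no} factor of $\epsilon$; on $\{\mathcal{S}\geq0\}$ you only get $\geq0\geq\psi|\nabla f|^2$. Integrating yields $2\int_M\mathcal{S}h^{ij}\nabla_if\nabla_jf\,d\mu\geq2\psi\lambda$, hence $\frac{d}{dt}\lambda(f,t)\geq(3\psi-\varphi)\lambda$ at $t_0$. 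Since $h_{ij}\geq\epsilon Hg_{ij}$ forces $\epsilon\leq 1/n<1$, and $\psi\leq0$, one has $3\psi<(1+2\epsilon)\psi$, so the claimed bound $\frac{d}{dt}\lambda\geq\big((1+2\epsilon)\psi-\varphi\big)\lambda$ is strictly stronger and does \emph{not} follow from this split. (The nonincreasing half is unaffected: the split does give $2\int_M\mathcal{S}h^{ij}\nabla_if\nabla_jf\,d\mu\leq2\varphi\lambda$ with your $\varphi=\max_M\mathcal{S}H\geq0$.) You should also note that the paper's own $\psi(t)=(C_1^2e^{-2C_2t}-\tilde r)\,C_1e^{-C_2t}$ is not a valid lower bound for $\mathcal{S}H$: since $H\geq C_1e^{-C_2t}$ pointwise we have $\tilde r\geq C_1^2e^{-2C_2t}$, so the first factor is nonpositive and must be paired with the \emph{upper} bound $C_3$ for $H$. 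Your choice $\psi=\min_M\mathcal{S}H$, $\varphi=\max_M\mathcal{S}H$ is the correct one.
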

\begin{proof}
Rivas and Sinestrari in  \cite{CES} shown that the inequality $\epsilon H g_{ij}\leq h_{ij}\leq H g_{ij}$ preserves along the  evolution  equation  (\ref{vmcf1}), therefore
\begin{equation*}
\epsilon H |\nabla f|^{2}  \leq
h^{ij}\nabla_{i}f\nabla_{j}f \leq H|\nabla f|^{2}\,\,\,\, \,\text{in} \,\, M^{n}\times[0,T).
\end{equation*}
 Moreover, since $H>0$ in $t=0$  by the maximum principle they shown the following inequalities also preserve.
\begin{equation}
0\leq C_{1}e^{-C_{2}t}\leq H\leq C_{3},
\end{equation}
for some constants  $C_{1}, C_{2}$ and $ C_{3}$.  Therefore,
 \begin{equation*}
C_{1}^{2}e^{-2C_{2}t}-\tilde{r}(t)\leq \mathcal{S}=(H^{2}- \tilde{r}(t)) \leq C_{3}^{2}-\tilde{r}(t)
\end{equation*}
and
 \begin{equation*}
\psi(t)=(C_{1}^{2}e^{-2C_{2}t}-\tilde{r}(t))C_{1}e^{-C_{2}t} \leq \mathcal{S}H=(H^{2}- \tilde{r}(t))H \leq (C_{3}^{2}-\tilde{r}(t))C_{3}=\varphi(t).
\end{equation*}
Now (\ref{R7}) results  that
\begin{eqnarray*}
\frac{d\lambda}{dt}(f,t)|_{t=t_{0}}
&\geq&\lambda(t_{0})\int_{M}\mathcal{S}H f^{2}d\mu
+2\epsilon \int_{M}\mathcal{S}H|\nabla f|^{2}d\mu
-\int_{M}|\nabla f|^{2}\mathcal{S}H d\mu\\\nonumber
&\geq& \lambda(t_{0}) \psi(t_{0})+2\epsilon  \psi(t_{0}) \int_{M}|\nabla f|^{2}d\mu
- \varphi(t)\int_{M}|\nabla f|^{2} d\mu\\\nonumber
&\geq& \lambda(t_{0})\left( \psi(t_{0})-\varphi(t_{0})+2\epsilon \psi(t_{0}) \right)
\end{eqnarray*}
this results that in any enough small neighborhood of $t_{0}$ as $I_{0}$,  we have
$$\frac{d}{dt}\lambda(f,t)\geq  \lambda(t)\left( \psi(t)-\varphi(t)+2\epsilon \psi(t) \right). $$
Integrating the last inequality with respect to variable time $t$ on $[t_{1}, t_{0}]\subset I_{0}$, we get
\begin{equation*}
\ln \frac{\lambda(f(t_{0}),t_{0})}{ \lambda(f(t_{1}),t_{1})}>\int_{t_{1}}^{t_{0}}\left( \psi(\tau)-\varphi(\tau)+2\epsilon \psi(\tau) \right)d\tau.
\end{equation*}
Since $\lambda(f(t_{0}),t_{0})=\lambda(t_{0})$ and  $\lambda(f(t_{1}),t_{1})\geq\lambda(t_{1})$ we conclude that
\begin{equation*}
\ln \frac{\lambda(t_{0})}{ \lambda(t_{1})}>\int_{t_{1}}^{t_{0}}\left( \psi(\tau)-\varphi(\tau)+2\epsilon \psi(\tau) \right)d\tau,
\end{equation*}
 that is the quantity  $e^{-\int_{0}^{t}\left( \psi(\tau)-\varphi(\tau)+2\epsilon \psi(\tau) \right)d\tau}\lambda(t)$ is strictly increasing  in any sufficiently small neighborhood of $t_{0}$. Since $t_{0}$ is arbitrary, then
\begin{equation*}
e^{-\int_{0}^{t}\left( \psi(\tau)-\varphi(\tau)+2\epsilon \psi(\tau) \right)d\tau}\lambda(t)
\end{equation*}
is nondecreasing along  the mean curvature flow (\ref{vmcf1}) on $[0,T)$. Similarly,
\begin{equation*}
e^{-\int_{0}^{t}\left( \varphi(\tau)-\psi(\tau)+2\varphi(\tau) \right)d\tau}\lambda(t)
\end{equation*}
is nonincreasing along the mean curvature flow (\ref{vmcf1}).
\end{proof}
In the following, we need to use Hamilton's  maximum principle for tensors on manifolds (see \cite{RSH}) which is
\begin{theorem}[\cite{RSH}]
Suppose that on $[0,T)$ the evolution equation
\begin{equation*}
\frac{\partial}{\partial t}M_{ij}=\Delta M_{ij}+u^{k}\nabla_{k}M_{ij}+N_{ij}
\end{equation*}
holds, where $N_{ij}=P(M_{ij}, g_{ij})$, a polynomial in $M_{ij}$ formed by contracting products of $M_{ij}$ with itself using  the metric, satisfies $N_{ij}X^{i}X^{j}\geq0$ for any null eigenvector  $X=\{X^{i}\}$ of  $M_{ij}$. If  $M_{ij}\geq 0$ at $t=0$ then it remains so on  $[0,T)$.
\end{theorem}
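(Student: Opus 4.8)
The statement above is Hamilton's tensor maximum principle, so the plan is to reduce it to the scalar (ODE) comparison principle by evaluating $M_{ij}$ against a carefully chosen vector field at the first space-time point where positivity is lost. First I would fix $T'<T$ and work on the compact slab $M\times[0,T']$. For $\varepsilon>0$ and a suitable increasing positive function $\phi(t)$ with $\phi(0)>0$ (to be pinned down in the last step), I introduce the perturbed tensor $\tilde M_{ij}:=M_{ij}+\varepsilon\phi(t)g_{ij}$, which is positive definite at $t=0$. It suffices to prove $\tilde M_{ij}>0$ on $M\times[0,T']$ for every $\varepsilon>0$: letting $\varepsilon\downarrow0$ and then $T'\uparrow T$ yields $M_{ij}\ge0$ on $[0,T)$. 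Arguing by contradiction, let $t_0\in(0,T']$ be the first time at which $\tilde M_{ij}$ fails to be positive definite, occurring at a point $p\in M$, and choose a unit vector $X_0\in T_pM$ with $\tilde M_{ij}(p,t_0)X_0^iX_0^j=0$; since $\tilde M_{ij}\ge0$ for all $t\le t_0$, the tensor is positive semidefinite at $(p,t_0)$, hence $X_0$ lies in its kernel, i.e. $\tilde M_{ij}(p,t_0)X_0^j=0$.

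Next I would extend $X_0$ to a $t$-independent vector field $X$ on a neighborhood of $p$ by parallel transport along the radial geodesics emanating from $p$, so that $\nabla X=0$ at $p$, and put $\varphi(x,t):=\tilde M_{ij}(x,t)X^i(x)X^j(x)$. By construction $\varphi(p,t_0)=0$, while $\varphi\ge0$ everywhere for $t\le t_0$; hence at $(p,t_0)$ the function $\varphi$ attains a spatial minimum and is nonincreasing in $t$, so $\partial_t\varphi\le0$, $\nabla\varphi=0$, $\Delta\varphi\ge0$ there. The point of the two choices $\nabla X=0$ at $p$ and $\tilde M_{ij}X^j=0$ at $(p,t_0)$ is that all terms in $\nabla\varphi$ and $\Delta\varphi$ carrying derivatives of $X$ disappear: at $(p,t_0)$ one gets $\nabla_k\varphi=(\nabla_k\tilde M_{ij})X^iX^j$ and $\Delta\varphi=(\Delta\tilde M_{ij})X^iX^j$ (the $\Delta X$ contributions die against $\tilde M_{ij}X^j=0$), while trivially $\partial_t\varphi=(\partial_t\tilde M_{ij})X^iX^j$ since $X$ is independent of $t$.

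Now I plug in the evolution equation. Writing $\partial_t\tilde M_{ij}=\Delta\tilde M_{ij}+u^k\nabla_k\tilde M_{ij}+N_{ij}+\varepsilon\phi'(t)g_{ij}+\varepsilon\phi(t)E_{ij}$, where $E_{ij}:=\partial_t g_{ij}-\Delta g_{ij}-u^k\nabla_k g_{ij}$ collects the contributions of the non-parallelism and evolution of $g_{ij}$ (these are smooth, hence bounded on the compact slab; for a fixed background metric $E_{ij}\equiv0$), I contract with $X^iX^j$ and use $\partial_t\varphi\le0$, $\Delta\varphi\ge0$, $\nabla\varphi=0$ to get, at $(p,t_0)$,
\[
0\ \ge\ N_{ij}X_0^iX_0^j+\varepsilon\phi'(t_0)+\varepsilon\phi(t_0)\,E_{ij}X_0^iX_0^j.
\]
It remains to show that the first term is almost nonnegative. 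The vector $X_0$ is not a null eigenvector of $M_{ij}$, but $\tilde M_{ij}X_0^j=0$ forces $M_{ij}(p,t_0)X_0^j=-\varepsilon\phi(t_0)(X_0)_i$, so $X_0$ is an eigenvector of $M_{ij}(p,t_0)$ with eigenvalue $-\varepsilon\phi(t_0)$, i.e. only $O(\varepsilon)$ away from the kernel. Since $N=P(M,g)$ is a polynomial and $M_{ij}$ is uniformly bounded on the slab, the null-eigenvector hypothesis $N_{ij}Y^iY^j\ge0$ upgrades by a standard continuity estimate to $N_{ij}X_0^iX_0^j\ge-C\varepsilon\phi(t_0)$ with $C=C(T')$. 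Combining the two displays gives $0\ge\varepsilon\big(\phi'(t_0)-(C+\sup|E_{ij}|)\phi(t_0)\big)$, so choosing $\phi(t)=e^{\Lambda t}$ with $\Lambda>C+\sup|E_{ij}|$ (subdividing $[0,T']$ first if one prefers a cleaner constant) makes the right-hand side strictly positive, a contradiction. Hence no such $t_0$ exists, so $\tilde M_{ij}>0$ throughout, and the theorem follows on letting $\varepsilon\downarrow0$, $T'\uparrow T$.

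I expect the last paragraph to be the crux: converting the qualitative hypothesis "$N_{ij}X^iX^j\ge0$ on null eigenvectors of $M$" into the quantitative bound $N_{ij}X_0^iX_0^j\ge-C\varepsilon\phi(t_0)$ for vectors that are only approximately null, uniformly over the compact space-time slab, and then calibrating the perturbation $\varepsilon\phi(t)g_{ij}$ so that its time-derivative $\varepsilon\phi'(t)g_{ij}$ strictly dominates this error together with the metric-error term $\varepsilon\phi(t)E_{ij}$. Everything else — the first-time argument, the parallel-transport construction, and the cancellation of the $\nabla X$ and $\Delta X$ terms via $\tilde M_{ij}X^j=0$ — is routine once this estimate is in hand.
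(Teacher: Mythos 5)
The paper does not actually prove this theorem: it is quoted directly from Hamilton \cite{RSH} and used as a black box, so there is no in-paper proof to compare against. Your reconstruction is essentially Hamilton's original tensor maximum principle argument and is correct: the exponential perturbation $\tilde M_{ij}=M_{ij}+\varepsilon e^{\Lambda t}g_{ij}$, the first-time-of-failure contradiction at $(p,t_0)$, the parallel-transport extension of $X_0$ so that the $\nabla X$ and $\Delta X$ contributions in $\nabla\varphi$ and $\Delta\varphi$ are killed by $\nabla X(p)=0$ and $\tilde M_{ij}X^j=0$, and the calibration of $\Lambda$ against the polynomial continuity bound and the $\partial_t g$ term all work as you describe. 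The one point that deserves to be made explicit is how the null-eigenvector hypothesis is invoked: you apply it to $\tilde M_{ij}(p,t_0)=M_{ij}(p,t_0)+\varepsilon\phi(t_0)g_{ij}(p,t_0)$ (whose null eigenvector is $X_0$) rather than to $M_{ij}$ itself, which is legitimate only if the condition is read as a structural property of the reaction polynomial $P$ --- namely $P(S,g)_{ij}Y^iY^j\ge 0$ for \emph{every} symmetric tensor $S$ and every $Y$ with $S_{ij}Y^j=0$ --- and not merely as a pointwise condition on the particular solution $M(x,t)$. That universal reading is what Hamilton intends and is what the continuity estimate silently requires, but since the statement as reproduced in the paper says only ``null eigenvector of $M_{ij}$,'' spelling this interpretation out would close the only soft spot in your argument.
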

\begin{lemma}\label{ll1}
Let $M_{t}^{n}$ be a solution of unnormalized $H^{k}$-flow on the smooth  closed oriented  manifold $M_{0}^{n}$.
If  there exist positive constant $a_{1}, a_{2},...,a_{n}$ such that the initial hypersurface $M_{0}^{n}$ satisfies
\begin{equation}\label{mm}
h_{ij}=a_{i}Hg_{ij},\,\,\,\,\,\,\text{where} \,\, \sum_{i=1}^{n}a_{i}=1,\,\,\text{and}\,\,\,\,\,\,\,0\leq \frac{1}{n}-a_{i}\leq \epsilon
\end{equation}
for small enough $\epsilon$ only depending on $n$,  then (\ref{mm}) remains  along the  unnormalized $H^{k}$-flow for any $t\in[0,T)$.
\end{lemma}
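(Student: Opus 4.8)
The plan is to reduce the preservation of the pinching condition (\ref{mm}) to an application of Hamilton's tensor maximum principle, exactly as cited just above the statement. The condition $h_{ij}=a_iHg_{ij}$ with $\sum a_i=1$ says that each principal curvature is a fixed proportion of the mean curvature; equivalently, writing $h_j^i$ in a principal frame, the eigenvalues of the traceless second fundamental form $\mathring h_{ij}=h_{ij}-\tfrac1n Hg_{ij}$ are $(a_i-\tfrac1n)H$. So the first step is to set up the tensor $M_{ij}$ whose nonnegativity encodes (\ref{mm}). A natural choice is to track the two-sided pinching $0\le \tfrac1n Hg_{ij}-h_{ij}+\text{(correction)}$; more precisely I would consider the tensors $T_{ij}^{\pm}=\big(\tfrac1n\pm\epsilon\big)Hg_{ij}\mp h_{ij}$ (with signs arranged so each is $\ge 0$ at $t=0$ by (\ref{mm})) and show each evolves by an equation of the form $\partial_tT_{ij}=\Delta_{\mathcal S}T_{ij}+u^k\nabla_kT_{ij}+N_{ij}$ with $N_{ij}$ satisfying the null-eigenvector condition. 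Here $\Delta_{\mathcal S}=\frac{\partial\mathcal S}{\partial h^m_l}\nabla^l\nabla_m$ is the (elliptic, since $\mathcal S=H^k$ and $H>0$ so $\frac{\partial H^k}{\partial h^m_l}=kH^{k-1}g^l_m>0$) linearized operator appearing in Lemma \ref{l1}(iii)-(iv).

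Next I would compute $\partial_t T_{ij}^{\pm}$ using Lemma \ref{l1}. From (iii) and (iv) with $\mathcal S=H^k$, $\gamma=k$, we get the evolution of $h_{ij}$ and of $H$, and from (i) the evolution of $g_{ij}$; combining, $T_{ij}^{\pm}$ evolves by a reaction–diffusion system whose diffusion term is $\frac{\partial\mathcal S}{\partial h^m_l}\nabla^l\nabla_m T_{ij}^\pm$ plus a gradient term coming from the mismatch between $\Delta_{\mathcal S}(Hg_{ij})$ and $(\Delta_{\mathcal S}H)g_{ij}$, which is a first-order operator in the metric and hence absorbed into $u^k\nabla_kT_{ij}^\pm$. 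The reaction term $N_{ij}$ will be a sum of the zeroth-order terms in (iii)-(iv): schematically $\frac{\partial\mathcal S}{\partial h^k_l}h^k_mh^m_l\cdot T_{ij}^\pm$ (which respects the cone automatically, being a scalar multiple of $T_{ij}^\pm$) together with the genuinely quadratic terms $-(\gamma+1)h_{ik}h^k_j\mathcal S$ from (iii) and $-(\gamma-1)|A|^2\mathcal S$ from (iv). The crux is to check $N_{ij}X^iX^j\ge 0$ whenever $X$ is a null eigenvector of $T_{ij}^\pm$; in a principal frame this becomes an inequality among the numbers $a_1,\dots,a_n$ and $H^{2k}$, and here is precisely where one uses that $\frac1n-a_i$ is small — the "bad" quadratic terms are controlled by the pinching gap $\epsilon$ because at a null eigenvector the corresponding principal curvature equals $(\tfrac1n\mp\epsilon)H$ exactly, pinning down the sign.

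The main obstacle, and the step I expect to require real work, is exactly this null-eigenvector computation for $N_{ij}$: verifying that for $\epsilon$ small depending only on $n$ the reaction term points back into the pinching cone. One has to be careful that the correction terms arising from commuting $\Delta_{\mathcal S}$ past $Hg_{ij}$ really are first order (so they can be swept into the $u^k\nabla_k$ term and do not contaminate $N_{ij}$), and that the traceless-Codazzi identities are used correctly to express $\nabla_i h_{jk}$ symmetrically — the second-derivative term $\frac{\partial^2\mathcal S}{\partial h^s_r\partial h^k_l}\nabla_ih^s_r\nabla_jh^k_l$ does not appear in $T_{ij}^\pm$ after one takes the trace-adjusted combination only if one is careful, so I would either choose the combination so it cancels or bound it using the Codazzi equation and the pinching. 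Once $N_{ij}X^iX^j\ge 0$ is established for both signs, Hamilton's maximum principle (the theorem quoted immediately before the lemma) gives $T_{ij}^\pm\ge 0$ for all $t\in[0,T)$, i.e. $|\tfrac1n Hg_{ij}-h_{ij}|\le\epsilon Hg_{ij}$ is preserved; combined with the trace constraint $\sum a_i=1$ (which is preserved because $\operatorname{tr}_g h=H$ always) this is exactly (\ref{mm}), completing the proof.
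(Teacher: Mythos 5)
There is a genuine gap, and it sits in the last sentence of your argument, not in the setup.

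You replace the paper's tensor (the paper works with $M_{ij}=h_{ij}-\tfrac12 a_iHg_{ij}$, verifies the null-eigenvector condition for the corresponding $N_{ij}$ explicitly, and runs the argument twice for the two inequalities) by the two pinching tensors $T_{ij}^{\pm}=(\tfrac1n\pm\epsilon)Hg_{ij}\mp h_{ij}$. That is a legitimate and in fact more standard choice (it is the traceless-pinching cone $|h_{ij}-\tfrac1n Hg_{ij}|\le\epsilon Hg_{ij}$), and if the null-eigenvector computation goes through it is preserved by the flow. But preserving this cone only tells you that each principal curvature $\lambda_i(t)$ stays in the band $[(\tfrac1n-\epsilon)H,(\tfrac1n+\epsilon)H]$. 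It does \emph{not} tell you that $\lambda_i(t)/H(t)$ remains equal to the specific constant $a_i$ from $t=0$, which is what equation~(\ref{mm}) asserts. The trace identity $\sum_i a_i=1$ gives you nothing extra: it is an automatic consequence of $\operatorname{tr}_g h=H$ at any time, for whatever ratios $\lambda_i/H$ happen to be. So the step ``combined with the trace constraint $\sum a_i=1$ this is exactly (\ref{mm})'' is a non sequitur; you have proved a strictly weaker statement (a pinching inequality, not an equality). To get the equality you would need to show each one-sided inequality $h_{ij}\ge a_iHg_{ij}$ \emph{and} $h_{ij}\le a_iHg_{ij}$ is preserved with the actual $a_i$, which is the route the paper takes.

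Separately, you have left the crux of the lemma — the null-eigenvector verification $N_{ij}X^iX^j\ge0$ for $\epsilon$ small depending only on $n$ — as a declared intention rather than a computation. In the paper this is the entire content of the proof: the reaction term is written out from Lemma~\ref{l1}(iii)--(iv), specialized at a null eigenvector, and shown to have the form $\tfrac14(k-1)H^k a_i(2|A|^2-a_iH^2)X_iX^i$, which is then controlled using $|A|^2\ge H^2/n$. Without carrying out the analogous computation for your $T_{ij}^{\pm}$ (and in particular checking, as you yourself flag, that the second-derivative term $\frac{\partial^2\mathcal S}{\partial h^s_r\partial h^k_l}\nabla_ih^s_r\nabla_jh^k_l$ either cancels or is dominated), the proposal does not establish the hypothesis of Hamilton's theorem and is therefore incomplete even for the weaker pinching conclusion.
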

\begin{proof}
By  (\ref{mm}), we have
\begin{equation*}
a_{i}Hg_{ij}\leq h_{ij}\leq a_{i}Hg_{ij},\,\,\,\,\,\,\text{on} \,\,\,M_{0}.
\end{equation*}
On the other hand, by Lemma (\ref{l1}) for $\mathcal{S}=H^{k}$ we get
\begin{equation}
\frac{\partial H}{\partial t}=kH^{k-1}\Delta H+k(k-1)H^{k-2}|\nabla H|^{2}+|A|^{2}H^{k}
\end{equation}
and
\begin{equation}
\frac{\partial h_{ij}}{\partial t}=-(k+1)H^{k}h_{il}h_{j}^{l}+kH^{k-1}\Delta h_{ij}+k(k-1)H^{k-2}\nabla_{i}H\nabla_{j}H+kH^{k-1}|A|^{2}h_{ij}.
\end{equation}
 From the classical maximum principle, we conclude that $H^{k}\geq 0$ for all  $k\in\mathbb{N}$ and $t\in[0,T)$.
If  $M_{ij}=h_{ij}-\frac{1}{2}a_{i}Hg_{ij}$ then
\begin{eqnarray}\nonumber
\frac{\partial M_{ij}}{\partial t}&=& kH^{k-1}\Delta M_{ij}+k(k-1)H^{k-2}\nabla_{i}H\nabla_{j}H-\frac{1}{2}a_{i}k(k-1)H^{k-1}|\nabla H|^{2}g_{ij}\\&&
-(k+1)H^{k}h_{il}h_{j}^{l}-\frac{1}{2}a_{i}|A|^{2}H^{k}g_{ij}+a_{i}H^{k+1}h_{ij}+kH^{k-1}|A|^{2}h_{ij}.
\end{eqnarray}
Suppose that
\begin{equation}
N_{ij}=-(k+1)H^{k}h_{il}h_{j}^{l}-\frac{1}{2}a_{i}|A|^{2}H^{k}g_{ij}+a_{i}H^{k+1}h_{ij}+kH^{k-1}|A|^{2}h_{ij}.
\end{equation}
By assumption of lemma  $M_{ij}\geq 0$ at $t=0$. We now show that $N_{ij}$ is nonnegative  on the  null-eigenvectors of $M_{ij}$. Assume that, For some vector $X=\{X^{i}\}$, we have $h_{ij}X^{j}=\frac{1}{2}a_{i}HX_{i}$. Thus, we get
\begin{eqnarray}\nonumber
N_{ij}X^{i}X^{j}&=&-\frac{1}{4}(k+1)H^{k+2}a_{i}a_{j}g^{ij}X_{i}X_{j}-\frac{1}{2}a_{i}|A|^{2}H^{k}g_{ij}X^{i}X^{j}+\frac{1}{2}a_{i}^{2}H^{k+2}X^{i}X_{i}\\\nonumber&&
+\frac{1}{2}ka_{i}H^{k}|A|^{2}X^{i}X_{i}\\\nonumber
&=&\frac{1}{4}H^{k+2}(-(k+1)a_{i}a_{j}g^{ij}X_{i}X_{j}+2a_{i}^{2}X^{i}X_{i})\\\nonumber
&&+\frac{1}{2}H^{k}|A|^{2}(-a_{i}g_{ij}X^{i}X^{j}+ka_{i}X^{i}X_{i})\\\nonumber
&=&\frac{1}{4}(k-1)H^{k}a_{i}(2|A|^{2}-a_{i}H^{2})X_{i}X^{i}.
\end{eqnarray}
Since $|A|^{2}\geq \frac {H^{2}}{n}$, we have $N_{ij}X^{i}X^{j}\geq 0$. Not that
\begin{equation}
\nabla H=\nabla (h_{ij}g^{ij})=\nabla [(M_{ij}+\frac{1}{2}a_{i}Hg_{ij})g^{ij}]=g^{ij}\nabla M_{ij}+\frac{1}{2}\nabla H
\end{equation}
which implies $\nabla H=2 g^{ij}\nabla M_{ij}$. Now, Hamilton's maximum principle to tensor implies that $M_{ij}\geq 0$ on $M_{t}$ for any $t\in [0,T)$,  that is  $a_{i}Hg_{ij}\leq h_{ij}$.  Similarly,  on $M_{t}$ we have $a_{i}Hg_{ij}\geq h_{ij}$ for any  $t\in [0,T)$. So we get $h_{ij}=a_{i}Hg_{ij}$ on $M\times [0,T)$.
\end{proof}
\begin{theorem}
Let  $(M^{n}, g(t))$ be  a solution of  the  unnormalized $H^{k}$-flow
on the smooth  closed oriented  manifold $(M^{n}, g_{0})$ and  $\lambda(t)$ denotes the
evolution of a first eigenvalue under the unnormalized  mean curvature  flow. If   initial hypersurface $M_{0}$ satisfies (\ref{mm}) then under the unnormalized m $H^{k}$-flow $\lambda(t)$ is nondecreasing.
\end{theorem}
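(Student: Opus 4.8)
The plan is to substitute the pinching preserved by Lemma~\ref{ll1} into the variation formula~(\ref{R162}) for the unnormalized $H^{k}$-flow (in which the last integrand is $|\nabla f|^{2}H^{k+1}$) and then evaluate the resulting integrals. First I would invoke Lemma~\ref{ll1}: the hypothesis (\ref{mm}) is preserved, so $h_{ij}=a_{i}Hg_{ij}$ on $M\times[0,T)$ with the same constants $a_{1},\dots,a_{n}$. Because $\sum_{i}a_{i}=1$ while $\tfrac1n-a_{i}\ge 0$ for every $i$, one in fact has $a_{i}=\tfrac1n$ for all $i$, so each slice $M_{t}$ is totally umbilical; being closed it is a round sphere, so (with the outward normal) $H=H(t)>0$ is spatially constant on $M_{t}$, and from the evolution of $H$ recorded in the proof of Lemma~\ref{ll1} (with $|A|^{2}=\tfrac1n H^{2}$ and $\Delta H=0$) one gets $\frac{d}{dt}H=\tfrac1n H^{k+2}>0$, so $H$ stays positive throughout.

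Next, from $h_{ij}=\tfrac1n Hg_{ij}$ I get the pointwise identity $h^{ij}\nabla_{i}f\nabla_{j}f=\tfrac1n H\,|\nabla f|^{2}$, so substituting into~(\ref{R162}) at $t=t_{0}$ gives
\[
\frac{d\lambda}{dt}(f,t)\Big|_{t=t_{0}}=\lambda(t_{0})\int_{M}H^{k+1}f^{2}\,d\mu+\Bigl(\tfrac2n-1\Bigr)\int_{M}H^{k+1}|\nabla f|^{2}\,d\mu .
\]
Since $H$ is spatially constant on $M_{t_{0}}$ and $f(t_{0})$ is the normalized eigenfunction, $\int_{M}H^{k+1}f^{2}\,d\mu=H^{k+1}$ and $\int_{M}H^{k+1}|\nabla f|^{2}\,d\mu=H^{k+1}\lambda(t_{0})$, so
\[
\frac{d\lambda}{dt}(f,t)\Big|_{t=t_{0}}=\tfrac2n\,H^{k+1}\lambda(t_{0})>0 ,
\]
the pinching parameter $\epsilon$ not even entering. (If one reads (\ref{mm}) instead with a two-sided bound $|\tfrac1n-a_{i}|\le\epsilon$, umbilicity is only approximate and one keeps $h^{ij}\nabla_{i}f\nabla_{j}f\ge(\tfrac1n-\epsilon)H|\nabla f|^{2}$, which produces the coefficient $\tfrac2n-2\epsilon-1$ on the gradient integral; to absorb this against $\lambda(t_{0})\int_{M}H^{k+1}f^{2}\,d\mu$ one then needs $H$ to be nearly spatially constant, i.e. a gradient estimate for $H$, and this is where the requirement that $\epsilon$ be small depending only on $n$ enters.)

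Finally I would conclude as in the proof of Theorem~\ref{tt1}: the strict inequality $\frac{d}{dt}\lambda(f,t)|_{t=t_{0}}>0$ persists on a small time interval about $t_{0}$, so $\lambda(f(t),t)$ is increasing there; combining $\lambda(f(t_{0}),t_{0})=\lambda(t_{0})$ with $\lambda(f(t_{1}),t_{1})\ge\lambda(t_{1})$ for $t_{1}<t_{0}$ gives $\lambda(t_{1})<\lambda(t_{0})$, and since $t_{0}\in[0,T)$ is arbitrary, $\lambda(t)$ is (strictly) nondecreasing along the unnormalized $H^{k}$-flow.

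The step I expect to be the real obstacle is balancing the two integrals above: the pinching alone only yields the coefficient $\tfrac2n-1\le 0$ (for $n\ge 2$) in front of $\int_{M}H^{k+1}|\nabla f|^{2}\,d\mu$, so the term $\lambda(t_{0})\int_{M}H^{k+1}f^{2}\,d\mu$ must make up the deficit. What makes the argument close cleanly is the rigidity built into (\ref{mm}) through Lemma~\ref{ll1}, namely that the slices are forced to be round and hence $H$ is spatially constant, so the weighted integrals collapse to the unweighted normalizations $\int_{M}f^{2}\,d\mu=1$ and $\int_{M}|\nabla f|^{2}\,d\mu=\lambda(t_{0})$.
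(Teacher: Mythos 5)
Your proof is correct but proceeds quite differently from the paper's, and the difference is substantive. The paper's proof substitutes the pinching $h_{ij}=a_i H g_{ij}$ into the variation formula (curiously, into~(\ref{R16}) for the unnormalized mean curvature flow rather than~(\ref{R162}) for the $H^k$-flow, so the exponent on $H$ is wrong there for $k\neq 1$), then estimates $a_i\ge \frac1n-\epsilon$ to obtain
\[
\frac{d\lambda}{dt}(f,t)\Big|_{t=t_0}\ \ge\ \lambda(t_0)\int_M H^2 f^2\,d\mu\ +\ 2\Bigl(\tfrac1n-\epsilon-\tfrac12\Bigr)\int_M H^2|\nabla f|^2\,d\mu,
\]
and then declares this positive provided $\epsilon\le\frac1n-\frac12$. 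That smallness condition can only hold when $n=2$ (with $\epsilon=0$); for $n\ge3$ the coefficient $\frac1n-\epsilon-\frac12$ is strictly negative irrespective of $\epsilon\ge0$, and the bound is inconclusive. You, by contrast, exploit the rigidity hidden in~(\ref{mm}): since $\sum_i a_i=1$ together with $a_i\le\frac1n$ forces $a_i=\frac1n$ exactly, every slice is totally umbilical and hence a round sphere, so $H$ is spatially constant; applying the correct formula~(\ref{R162}) then collapses the two weighted integrals to $H^{k+1}$ and $H^{k+1}\lambda(t_0)$ and yields the clean identity $\frac{d\lambda}{dt}=\frac2n H^{k+1}\lambda(t_0)>0$ in every dimension. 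What each approach buys: the paper's route is meant to work under a genuine open pinching, but (i) it misquotes the variation formula and (ii) it produces a smallness condition that is vacuous for $n\ge3$; your route recognizes that the stated hypothesis is rigid, repairs the formula, and closes the argument uniformly in $n$. (Your parenthetical remark about the two-sided reading $|\frac1n-a_i|\le\epsilon$ is also the right diagnosis of where the paper's estimate would actually need a gradient bound on $H$.) In short, you did not merely reproduce the paper's argument — you found and fixed two gaps in it.
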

\begin{proof}
From  Lemma \ref{ll1}  we have
\begin{equation}
h_{ij}=a_{i}Hg_{ij},\,\,\,\,\,\,\text{on} \,\,\,M_{t}.
\end{equation}
So, from (\ref{R16}) we conclude that
\begin{eqnarray*}
\frac{d\lambda}{dt}(f,t)|_{t=t_{0}}
&=&\lambda(t_{0})\int_{M}H^{2} f^{2}d\mu
+2\int_{M}H^{2} a_{i}g^{ij}\nabla_{i} f \nabla_{j}fd\mu-\int_{M}|\nabla f|^{2}H^{2} d\mu\\
&\geq&\lambda(t_{0})\int_{M}H^{2} f^{2}d\mu
+2(\frac{1}{n}-\epsilon-\frac{1}{2})\int_{M}H^{2}|\nabla f|^{2}d\mu
\end{eqnarray*}
so for $\epsilon\leq \frac{1}{n}-\frac{1}{2}$, we have
 $\frac{d}{dt}\lambda(f(t),t)>0$  in any sufficiently small neighborhood of $t_{0}$, then
\begin{equation*}
\lambda(f(t_{1}),t_{1})<\lambda(f(t_{0}),t_{0})\,\,\,\,\,\, \text{on}\,\,\,\,\,\,  [t_{1},t_{0}].
\end{equation*}
 Since $\lambda(f(t_{0}),t_{0})=\lambda(t_{0})$ and  $\lambda(f(t_{1}),t_{1})\geq\lambda(t_{1})$ we conclude that $\lambda(t_{1})<\lambda(t_{0})$, hence $\lambda(t)$ is  nondecreasing
 in any sufficiently small neighborhood of $t_{0}$.  Then $\lambda(t)$ is nondecreasing along $H^{k}$-flow  on $[0,T)$ because of  $t_{0}$ is arbitrary.
\end{proof}
\begin{example}
Let  $M_{0}=\mathbb{S}_{R}^{n}(0)$  be the sphere of radius $R$ around the origin. The mean curvature $\mathbb{S}_{R}^{n}(0)$ is  equal to $\frac{n}{R}$. If we set $F(x,t)=r(t)F_{0}(x)$, where $F_{0}(x)$ is the standard immersion of $M_{0}$ in $\mathbb{R}^{n+1}$, then we have
\begin{equation*}
R'(t)F_{0}(x)=\frac{\partial}{\partial t}F(x,t)=-H(x,t)\nu(x,t)=-\frac{n}{r(t)}F_{0}(x),
\end{equation*}
therefore the evolution of $M_{0}$ is given by sphere $M_{t}=\mathbb{S}_{r(t)}^{n}(0)$, where the radius $r(t)$ evolves according to the ordinary differential equation
 \begin{equation*}
r'(t)=-\frac{n}{r(t)},\,\,\,\,\,r(0)=R.
\end{equation*}
The solution is given by $r(t)=\sqrt{R^{2}-2nt}$, thus
 \begin{equation*}
H(x,t)=\frac{n}{\sqrt{R^{2}-2nt}},\,\,\,\,\,h^{ij}(x,t)=\frac{1}{\sqrt{R^{2}-2nt}}g^{ij}(t)=\frac{H(x,t)}{n}g^{ij}(t).
\end{equation*}
which these and (\ref{R16}) imply that
\begin{equation}\label{e1}
\frac{d\lambda}{dt}(f,t)|_{t=t_{0}}
=2\frac{H^{2}(x,t_{0})}{n}\int_{M}|\nabla f|^{2}d\mu=2\frac{H^{2}(x,t_{0})}{n}\lambda(t_{0}).
\end{equation}
we arrive at $\frac{d}{dt}\lambda(f(t),t)>0$  in any sufficiently small neighborhood of $t_{0}$, therefore
\begin{equation*}
\lambda(f(t_{1}),t_{1})<\lambda(f(t_{0}),t_{0})\,\,\,\,\,\, \text{on}\,\,\,\,\,\,  [t_{1},t_{0}].
\end{equation*}
 Since $\lambda(f(t_{0}),t_{0})=\lambda(t_{0})$ and  $\lambda(f(t_{1}),t_{1})\geq\lambda(t_{1})$ we get $\lambda(t_{1})<\lambda(t_{0})$  which show that $\lambda(t)$ is strictly monotone increasing
 in any enough small neighborhood of $t_{0}$.  Then $\lambda(t)$ is strictly increasing along the mean curvature flow.
\end{example}

\end{document}